 \newtheorem{thm}{Theorem}[section]
 \newtheorem{lem}[thm]{Lemma}
 \theoremstyle{definition}
 \newtheorem{defn}[thm]{Definition}
 \theoremstyle{remark}
 \newtheorem{rem}[thm]{Remark}
 \newtheorem*{ex}{Example}
 \numberwithin{equation}{section}
\def\qfrac#1#2{\left[#1\atop #2\right]}
\newcommand{\loc}{\mathop{\mathrm{loc}}}
\newcommand{\D}{\mathrm{d}}
\begin{document}

%
%
%
%
%
%
%
%
%

\title[$(p,q)$-Beta Functions and Applications]
 {$(p,q)$-Beta Functions and Applications\\ in Approximation}

\author[G.V. Milovanovi\'c]{Gradimir V. Milovanovi\'c}

\address{%
Serbian Academy of Sciences and Arts\\
Kneza Mihaila 35, 11000 Beograd, Serbia\\
\& State University of Novi Pazar\\
Novi Pazar, Serbia}

\email{gvm@mi.sanu.ac.rs}

\thanks{This paper was supported
 by the Serbian Ministry of Education, Science and Technological Development (No. \#OI\,174015).}
\author{Vijay Gupta}
\address{Department of Mathematics\br
Netaji Subhas Institute of Technology\br
Sector 3 Dwarka, New Delhi-110078, India}
\email{vijaygupta2001@hotmail.com}

\author{Neha Malik}
\address{Department of Mathematics\br
Netaji Subhas Institute of Technology\br
Sector 3 Dwarka, New Delhi-110078, India}
\email{neha.malik\_nm@yahoo.com}

\subjclass{Primary 33B15; Secondary 41A25}

\keywords{$(p,q)$-Beta function, $(p,q)$-Gamma function, Bernstein
polynomial, Durrmeyer variant, direct results}

\date{January 1, 2016}

\begin{abstract}
In the present paper, we consider $(p,q)$-analogue of the Beta operators and using it, we propose the integral modification of the generalized Bernstein polynomials. We estimate some direct results on local and global approximation. Also, we illustrate some graphs for the convergence of $(p,q)$-Bernstein-Durrmeyer operators for different values of the parameters $p$ and $q$ using {\sc Mathematica} package.\end{abstract}

\maketitle
\section{Introduction}

In the last decade, the generalizations of several operators to quantum variant have been introduced and their approximation behavior have been discussed [see for instance \cite{avr}, \cite{amc}, \cite{ZFVG}, \cite{GVM}, \cite{vgrpa} etc]. The further generalization of quantum calculus is the post-quantum calculus, denoted by $(p,q)$-calculus. Recently, some researchers started working in this direction (cf. \cite{TA}, \cite{vgumi}, \cite{4}, \cite{VSSY}). Some basic definitions and theorems, which are mentioned below may be found in these papers and references therein.
\begin{equation*}
\left[ n\right] _{p,q}:=p^{n-1}+p^{n-2}q+p^{n-3}q^2+\cdots +pq^{n-2}+q^{n-1}=\frac{p^{n}-q^{n}}{p-q} \cdot
\end{equation*}
The $\left( p,q\right)$-factorial is given by
$\left[ n\right] _{p,q}!=\prod\limits_{k=1}^{n}\left[ k\right] _{p,q},
\ \ n\ge  1, \ \ \left[0\right] _{p,q}!=1$.
The $\left( p,q\right) $-binomial coefficient satisfies%
\begin{equation*}
\qfrac{n}{k}_{p,q}=\frac{\left[ n\right]_{p,q}!}{\left[ n-k\right] _{p,q}!\left[
k\right] _{p,q}!}, \ \ 0\le  k\le  n.
\end{equation*}

The $(p,q)$-power basis is defined as
\begin{eqnarray*}
(x\ominus a)_{p,q}^{n}&=&(x-a)(px-qa)(p^2x-q^2a)\cdots (p^{n-1}x-q^{n-1}a).
\end{eqnarray*}

The $(p,q)$-derivative of the function $f$ is defined as
\begin{equation*}
D_{p,q}f\left( x\right) =\frac{f\left( px\right) -f\left( qx\right) }{\left(
p-q\right) x}, \ x\neq 0.
\end{equation*}
As a special case when $p=1$, the $(p,q)$-derivative
reduces to the $q$-derivative.

The $(p,q)$-derivative fulfils the following product rules%
\begin{eqnarray*}
D_{p,q}(f(x)g(x)) &=&f(px)D_{p,q}g(x)+g(qx)D_{p,q}f(x), \\[2mm]
D_{p,q}(f(x)g(x)) &=&g(px)D_{p,q}f(x)+f(qx)D_{p,q}g(x).
\end{eqnarray*}

Obviously $D_{p,q}(x\ominus a)_{p,q}^{0}=0$ and for $n\ge  1,$ we have
\begin{eqnarray*}
D_{p,q}(x\ominus a)_{p,q}^{n} &=&\left[ n\right] _{p,q}(px\ominus
a)_{p,q}^{n-1}, \\[2mm]
D_{p,q}(a\ominus x)_{p,q}^{n} &=&-\left[ n\right] _{p,q}(a\ominus
qx)_{p,q}^{n-1}.
\end{eqnarray*}

Let $f$ be an arbitrary function and $a\in \mathbb{R}$. The $(p,q)$-integral of $f(x)$ on \ $\left[ 0,a\right] $ (see \cite{pns}) is defined as
\begin{equation*}
\int\limits_{0}^{a}f\left( x\right) d_{p,q}x=\left( q-p\right) a\sum
\limits_{k=0}^{\infty }\frac{p^{k}}{q^{k+1}}f\left( \frac{p^{k}}{q^{k+1}}%
a\right) \  \  \mbox{ if } \  \  \left \vert \frac{p}{q}\right \vert <1
\end{equation*}
and
\begin{equation*}
\int\limits_{0}^{a}f\left( x\right) d_{p,q}x=\left( p-q\right) a\sum
\limits_{k=0}^{\infty }\frac{q^{k}}{p^{k+1}}f\left( \frac{q^{k}}{p^{k+1}}%
a\right) \  \  \mbox{ if } \  \  \left \vert \frac{q}{p}\right \vert <1.
\end{equation*}

The formula of $(p,q)$-integration by part is given by
\begin{eqnarray}\label{a1}
\int\limits_{a}^{b}f\left( px\right) D_{p,q}g\left( x\right) {\D}_{p,q}x&=&f\left(
b\right) g\left( b\right) -f\left( a\right) g\left( a\right)\nonumber\\
&&\qquad -\int\limits_{a}^{b}g\left( qx\right) D_{p,q}f\left( x\right){\D}_{p,q}x	\end{eqnarray}

Very recently Gupta and Aral \cite{VA} proposed the $(p,q)$ analogue of usual Durrmeyer operators by considering some other form of $(p,q)$ Beta functions, which is not commutative.
In the present article, we define different $(p,q)$-variant of Beta function of first kind and find an identity relation with $(p,q)$-Gamma functions. It is observed that $(p,q)$-Beta functions may satisfy the commutative property, by multiplying the appropriate factor while choosing $(p,q)$ Beta function. As far as the approximation is concerned, order is important in post-quantum calculus. We propose a generalization of Durrmeyer type operators and establish some direct results.

\section{$(p,q)$-Gamma and $(p,q)$-Beta Functions}

\begin{defn}[\cite{2}]
 Let $n$ is a nonnegative integer, we define the $(p,q)$-Gamma
function as%
\begin{equation*}
\Gamma _{p,q}\left( n+1\right) =\frac{(p\ominus q)_{p,q}^{n}}{(p-q)^{n}}=%
\left[ n\right] _{p,q}!, \quad  \ 0<q<p.
\end{equation*}
\end{defn}

\begin{defn}
Let $m,n\in \mathbb{N},$ we define $\left( p,q\right) $-Beta integral as%
\begin{equation}
B_{p,q}\left( m,n\right) =\int\limits_{0}^{1}x
^{m-1}\left( 1\ominus qx\right) _{p,q}^{n-1}{\D}_{p,q}x.  \label{a5}
\end{equation}
\end{defn}

\begin{thm}
The $(p,q)$-Gamma and $(p,q)$-Beta functions fulfil the following
fundamental relation%
\begin{equation}
B_{p,q}\left( m,n\right) =p^{(n-1)(2m+n-2)/2}\frac{\Gamma
_{p,q}\left( m\right) \Gamma _{p,q}\left( n\right) }{\Gamma _{p,q}\left(
m+n\right) },  \label{a4}
\end{equation}%
where $m,n\in \mathbb{N}$.
\end{thm}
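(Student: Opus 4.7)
The plan is to derive the identity by iterating a one-step reduction formula obtained from the $(p,q)$-integration by parts rule \eqref{a1}, then computing the base case explicitly and simplifying the accumulated power of $p$.

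First I would apply \eqref{a1} to $B_{p,q}(m,n)$ with the choices
\[
f(y) = p^{1-m}\,y^{m-1}, \qquad g(y) = -\frac{1}{[n]_{p,q}}\,(1\ominus y)_{p,q}^{n},
\]
so that $f(px) = x^{m-1}$ and, by the derivative formula $D_{p,q}(1\ominus x)_{p,q}^{n} = -[n]_{p,q}(1\ominus qx)_{p,q}^{n-1}$, one has $D_{p,q}g(x) = (1\ominus qx)_{p,q}^{n-1}$. Since $(1\ominus 1)_{p,q}^{n}$ vanishes (its first factor is $1-1=0$) and $f(0)=0$ for $m\ge 2$, the boundary term $f(1)g(1)-f(0)g(0)$ drops out, leaving
\[
B_{p,q}(m,n) = \frac{[m-1]_{p,q}}{p^{m-1}\,[n]_{p,q}}\, B_{p,q}(m-1,\,n+1), \qquad m\ge 2.
\]

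Next I would iterate this recursion $m-1$ times. The numerator telescopes to $[m-1]_{p,q}!$, the denominators of $[n]_{p,q}$-type factors collect to $[n+m-2]_{p,q}!/[n-1]_{p,q}!$, and the powers of $p$ accumulate to $p^{(m-1)+(m-2)+\cdots+1} = p^{m(m-1)/2}$, giving
\[
B_{p,q}(m,n) = \frac{[m-1]_{p,q}!\,[n-1]_{p,q}!}{p^{m(m-1)/2}\,[n+m-2]_{p,q}!}\, B_{p,q}(1,\,n+m-1).
\]
For the base case, writing $N=n+m-1$ and using $(1\ominus qx)_{p,q}^{N-1} = -[N]_{p,q}^{-1}\,D_{p,q}(1\ominus x)_{p,q}^{N}$, I would invoke \eqref{a1} with $f\equiv 1$ (its "fundamental-theorem" specialization) to obtain
\[
B_{p,q}(1,N) = -\frac{1}{[N]_{p,q}}\bigl[(1\ominus 1)_{p,q}^{N} - (1\ominus 0)_{p,q}^{N}\bigr] = \frac{p^{N(N-1)/2}}{[N]_{p,q}},
\]
since $(1\ominus 0)_{p,q}^{N} = 1\cdot p\cdot p^{2}\cdots p^{N-1} = p^{N(N-1)/2}$.

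Finally I would substitute this back, convert factorials to $\Gamma_{p,q}$ via $\Gamma_{p,q}(k) = [k-1]_{p,q}!$, and combine the two powers of $p$. The exponent simplifies as
\[
\frac{(n+m-1)(n+m-2)}{2} - \frac{m(m-1)}{2} = \frac{(n-1)(2m+n-2)}{2},
\]
which yields \eqref{a4}. The only genuinely delicate step is keeping the $p$-bookkeeping consistent in the iteration, since the $(p,q)$-integration-by-parts rule shifts arguments asymmetrically (one factor is evaluated at $px$, the other at $qx$); the rest is a routine adaptation of the classical $q$-Beta argument with $p=1$.
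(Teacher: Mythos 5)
Your proposal is correct. The first step is identical to the paper's: the same integration-by-parts choice (the paper takes $f(x)=(x/p)^{m-1}$, which is your $p^{1-m}x^{m-1}$) yielding the reduction $B_{p,q}(m,n)=\frac{[m-1]_{p,q}}{p^{m-1}[n]_{p,q}}B_{p,q}(m-1,n+1)$. From there the routes diverge. The paper does not iterate this relation directly; instead it expands $(1\ominus qx)_{p,q}^{n}=(1\ominus qx)_{p,q}^{n-1}(p^{n-1}-q^{n}x)$ to get $B_{p,q}(m,n+1)=p^{n-1}B_{p,q}(m,n)-q^{n}B_{p,q}(m+1,n)$, feeds the reduction formula back in to eliminate $B_{p,q}(m+1,n)$, and solves the resulting linear equation to obtain a recursion purely in the second argument, $B_{p,q}(m,n+1)=p^{n+m-1}\frac{p^{n}-q^{n}}{p^{n+m}-q^{n+m}}B_{p,q}(m,n)$, which it then telescopes down to the trivial base case $B_{p,q}(m,1)=1/[m]_{p,q}$. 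You instead iterate the reduction formula along the anti-diagonal $m+n=\mathrm{const}$ down to $B_{p,q}(1,N)$ with $N=m+n-1$, which costs you a slightly less immediate base case, $B_{p,q}(1,N)=p^{N(N-1)/2}/[N]_{p,q}$ — but your evaluation of it via the fundamental theorem and $(1\ominus 0)_{p,q}^{N}=p^{N(N-1)/2}$ is correct (and consistent with \eqref{a4} at $m=1$), and your exponent bookkeeping $\frac{(n+m-1)(n+m-2)}{2}-\frac{m(m-1)}{2}=\frac{(n-1)(2m+n-2)}{2}$ checks out. Your route is arguably the more economical of the two, since it dispenses with the derivation of the second recursion entirely; what the paper's detour buys is the intermediate identity $B_{p,q}(m,n+1)=p^{n-1}B_{p,q}(m,n)-q^{n}B_{p,q}(m+1,n)$, which it records separately as an observation in the remark following the theorem.
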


\begin{proof}
For any $m,n\in $ $\mathbb{N}$ since%
\begin{equation*}
B_{p,q}\left( m,n\right) =\int\limits_{0}^{1}x
^{m-1}\left( 1\ominus qx\right) _{p,q}^{n-1}{\D}_{p,q}x,
\end{equation*}%
using (\ref{a1}) for $f\left( x\right) =\left({x}/{p}\right)^{m-1}$ and $g\left( x\right) =-
 {\left( 1\ominus x\right) _{p,q}^{n}}/{\left[ n\right] _{p,q}}$ with
the equality $D_{p,q}\left( 1\ominus x\right) ^{n}=-\left[ n\right]
_{p,q}\left( 1\ominus qx\right) ^{n-1}$ we have
\begin{eqnarray}
B_{p,q}\left( m,n\right) &=&\frac{\left[ m-1\right] _{p,q}}{p^{m-1}\left[ n%
\right] _{p,q}}\int\limits_{0}^{1}x^{m-2}\left( 1\ominus
qx\right) _{p,q}^{n}{\D}_{p,q}x  \notag \\
&=&\frac{\left[ m-1\right] _{p,q}}{p^{m-1}\left[ n\right] _{p,q}}%
B_{p,q}\left( m-1,n+1\right).  \label{a2}
\end{eqnarray}%
Also we can write for positive integer $n$%
\begin{eqnarray*}
B_{p,q}\left( m,n+1\right) \!\!\!&=&\!\!\!\int\limits_{0}^{1}x
^{m-1}\left( 1\ominus qx\right) _{p,q}^{n}{\D}_{p,q}x \\
\!\!\!&=&\!\!\!\int\limits_{0}^{1}x^{m-1}\left( 1\ominus qx\right)
_{p,q}^{n-1}\left( p^{n-1}-q^{n}x\right) {\D}_{p,q}x \\
\!\!\!&=&\!\!\!p^{n-1}\int\limits_{0}^{1}x^{m-1}\left( 1\ominus
qx\right) _{p,q}^{n-1}{\D}_{p,q}x -q^{n}\int\limits_{0}^{1}x^{m}\left(1\ominus
qx\right) _{p,q}^{n-1}{\D}_{p,q}x \\
\!\!\!&=&\!\!\!p^{n-1}B_{p,q}\left( m,n\right) -q^{n}B_{p,q}\left( m+1,n\right) .
\end{eqnarray*}%
Using (\ref{a2}), we have
\begin{equation*}
B_{p,q}\left( m,n+1\right) =p^{n-1}B_{p,q}\left( m,n\right) -q^{n}\frac{\left[
m\right] _{p,q}}{p^{m}\left[ n\right] _{p,q}}B_{p,q}\left( m,n+1\right),
\end{equation*}%
which implies that
\begin{equation*}
B_{p,q}\left( m,n+1\right) =p^{n+m-1}\frac{p^{n}-q^{n}}{p^{n+m}-q^{n+m}}%
B_{p,q}\left( m,n\right) .
\end{equation*}%
Further, by definition of $(p,q)$ integration
\begin{eqnarray*}
B_{p,q}\left( m,1\right) &=&\int\limits_{0}^{1}x
^{m-1}{\D}_{p,q}x=\frac{1}{\left[ m\right] _{p,q}} \cdot
\end{eqnarray*}%
We immediately have%
{\small\begin{eqnarray}
B_{p,q}\left( m,n\right) \!\!\!&=&\!\!\!p^{n+m-2}\frac{p^{n-1}-q^{n-1}}{%
p^{n+m-1}-q^{n+m-1}}B_{p,q}\left( m,n-1\right)  \notag \\
\!\!\!&=&\!\!\!p^{n+m-2}\frac{p^{n-1}-q^{n-1}}{p^{n+m-1}-q^{n+m-1}}\,p^{n+m-3}\frac{%
p^{n-2}-q^{n-2}}{p^{n+m-2}-q^{n+m-2}}B_{p,q}\left( m,n-2\right)  \notag \\
\!\!\!&=&\!\!\!p^{n+m-2}\frac{p^{n-1}-q^{n-1}}{p^{n+m-1}-q^{n+m-1}}\,p^{n+m-3}\frac{%
p^{n-2}-q^{n-2}}{p^{n+m-2}-q^{n+m-2}}\cdots \notag\\
&&\qquad\qquad \qquad\qquad\qquad\qquad\times\, p^{m}\frac{p-q}{p^{m+1}-q^{m+1}}%
B_{p,q}\left( m,1\right)  \notag \\
\!\!\!&=&\!\!\!p^{n+m-2}\frac{p^{n-1}-q^{n-1}}{p^{n+m-1}-q^{n+m-1}}\,p^{n+m-3}\frac{%
p^{n-2}-q^{n-2}}{p^{n+m-2}-q^{n+m-2}}\cdots\notag\\
&&\qquad\qquad \qquad\qquad\qquad\qquad\times\, p^{m}\frac{p-q}{p^{m+1}-q^{m+1}}%
\frac{1}{\left[ m\right] _{p,q}}  \notag \\
\!\!\!&=&\!\!\!p^{m+(m+1)+\cdots+(m+n-2)}\frac{\left( p\ominus q\right)
_{p,q}^{n-1}}{\left( p^{m}\ominus q^{m}\right) _{p,q}^{n}}\left( p-q\right)
\notag \\
\!\!\!&=&\!\!\!p^{(n-1)(2m+n-2)/2}\frac{\left( p\ominus q\right)
_{p,q}^{n-1}}{\left( p^{m}\ominus q^{m}\right) _{p,q}^{n}}\left( p-q\right).
\label{a3}
\end{eqnarray}}

Following \cite{2}, we have $(a\ominus b)_{p,q}^{n+m}=(a\ominus
b)_{p,q}^n(ap^n\ominus bq^n)_{p,q}^m$ thus (\ref{a3}) leads to

{\small\begin{eqnarray*}
B_{p,q}\left( m,n\right) \!\!\!\!&=&\!\!\!\!p^{\frac{(n-1)(2m+n-2)}2}\frac{\left(
p\ominus q\right) _{p,q}^{n-1}}{\left( p^{m}\ominus q^{m}\right) _{p,q}^{n}}%
\left( p-q\right) \\
\!\!\!\!&=&\!\!\!\!p^{\frac{(n-1)(2m+n-2)}2}\frac{\left( p\ominus q\right)
_{p,q}^{n-1}}{\left( p-q\right) ^{n-1}}\cdot\frac{\left( p\ominus q\right)
_{p,q}^{m-1}}{\left( p-q\right) ^{m-1}}\cdot\frac{\left( p-q\right) ^{m-1}\left(
p-q\right) ^{n-1}}{\left( p\ominus q\right) _{p,q}^{m-1}\left( p^{m}\ominus
q^{m}\right) _{p,q}^{n}}\left( p-q\right) \\
\!\!\!\!&=&\!\!\!\!p^{\frac{(n-1)(2m+n-2)}2}\frac{\left( p\ominus q\right)
_{p,q}^{n-1}}{\left( p-q\right) ^{n-1}}\cdot\frac{\left( p\ominus q\right)
_{p,q}^{m-1}}{\left( p-q\right) ^{m-1}}\cdot\frac{\left( p-q\right) ^{m+n-1}}{%
\left( p\ominus q\right) _{p,q}^{m+n-1}} \\
\!\!\!\!&=&\!\!\!\!p^{\frac{(n-1)(2m+n-2)}2}\frac{\Gamma _{p,q}\left( m\right)
\Gamma _{p,q}\left( n\right) }{\Gamma _{p,q}\left( m+n\right) } \cdot
\end{eqnarray*}}
This completes the proof of the theorem.
\end{proof}

\begin{rem} The following observations have been made for $(p,q)$ Beta functions:

\begin{itemize}
 \item For $m,n\in \mathbb{N}$, we have
$$B_{p,q}(m,n+1)=p^{n-1}B_{p,q}(m,n)-q^nB_{p,q}(m+1,n).$$
\item
The $\left( p,q\right) $-Beta integral defined by (\ref{a5}) is not commutative. In order to make commutative, we may consider the following form
\begin{equation*}
\widetilde{B}_{p,q}\left( m,n\right) =\int\limits_{0}^{1}p^{m(m-1)/2}x
^{m-1}\left( 1\ominus qx\right) _{p,q}^{n-1}{\D}_{p,q}x.
\end{equation*}
For this form, $(p,q)$-Gamma and $(p,q)$-Beta functions fulfill the following
fundamental relation
\begin{equation}
\widetilde{B}_{p,q}\left( m,n\right) =p^{(2mn+m^2+n^2-3m-3n+2)/2}\frac{\Gamma
_{p,q}\left( m\right) \Gamma _{p,q}\left( n\right) }{\Gamma _{p,q}\left(
m+n\right) },  \label{a-com}
\end{equation}%
where $m,n\in \mathbb{N}.$ Obviously for form (\ref{a-com}), we get $\widetilde{B}_{p,q}\left( m,n\right)=\widetilde{B}_{p,q}\left( n,m\right).$
\end{itemize}
\end{rem}

\section{$(p,q)$ Bernstein-Type Operators and Moments}

For $n\in \mathbb{N}$ and $k\ge  0,$ we have the following identity, which can be easily verified using the principle of mathematical induction:
\begin{equation}\label{eb0}\sum\limits_{k=0}^n \qfrac{n}{k}_{p,q}p^{k(k-1)/2} x^{k}(1\ominus x)_{p,q}^{n-k}=p^{n(n-1)/2}.
\end{equation}
Using the above identity, we consider the $\left( p,q\right)$-analogue of Bernstein operators for $x\in \left[ 0,1\right]$ and $0<q<p\le  1$ as
\begin{equation}\label{eb}
B_{n,p,q}\left( f,x\right) =\sum\limits_{k=0}^{n}b_{n,k}^{p,q}(1,x)f\left(\frac{%
p^{n-k}[k]_{p,q}}{[n]_{p,q}}\right) ,\end{equation}
where the $(p,q)$-Bernstein basis is defined as
\begin{equation*}
b_{n,k}^{p,q}(1,x)=\qfrac{n}{k}_{p,q}p^{[k(k-1)-n(n-1)]/2} x^{k}(1\ominus x)_{p,q}^{n-k} .
\end{equation*}

\begin{rem} \label{r0}
Other form of the $(p,q)$-analogue of Bernstein polynomials has been recently considered by Mursaleen et al. \cite{4}.
\end{rem}

\begin{rem} \label{r1}
Using \ the \ identity  \ (\ref{eb0}) \ and \ the \ following \ recurrence \ relation (for $m\ge  1,\
U_{n,m}^{p,q}(x)=B_{n,p,q}(e_{m},x)=B_{n,p,q}(t^{m},x))$:
\[ [n]_{p,q} U_{n,m+1}^{p,q}(px)=p^{n} x (1-px) D_{p,q}[U_{n,m}^{p,q}(x)] + [n]_{p,q} px U_{n,m}^{p,q}(px), \]
 the $(p,q)$-Bernstein polynomial satisfy
\[B_{n,p,q}\left(e_{0},x\right)=1,\quad
B_{n,p,q}\left(e_{1},x\right)=x,\quad
B_{n,p,q}\left(e_{2},x\right)=x^2+\frac{p^{n-1}x(1-x)}{[n]_{p,q}},\]
where $e_{i}=t^{i},$ $i=0,1,2.$
\end{rem}

Recently, Gupta and Wang (see \cite{vghw}) discussed the $q$-variant of certain Bern\-stein-Durrmeyer type operators. We now extend these studies and propose the following $(p,q)$-Bernstein-Durrmeyer operators based on $(p,q)$-Beta function.

For $x\in[0,1]$ and $0<q<p\le  1,$ the $\left( p,q\right)$-analogue of Bernstein-Durrmeyer operators is defined as
\begin{eqnarray} \label{operator}
D_{n}^{p,q}(f,x)&=&\left[ n+1\right] _{p,q}
\sum\limits_{k=1}^{n}p^{-(n-k+1)(n+k)/2}b_{n,k}^{p,q}(1,x)\nonumber\\[2mm]	
&&\qquad\quad\times\,\int\limits_{0}^{1}b_{n,k-1}^{p,q}(t)f( t){\D}_{p,q}t + b_{n,0}^{p,q}(1,x)f(0),
\end{eqnarray}	
where $b_{n,k}^{p,q}(1,x)$ is defined by (\ref{eb}) and
$$b_{n,k}^{p,q}(t)=\qfrac{n}{k}_{p,q} t^{k}(1\ominus qt)_{p,q}^{n-k}.$$

It may be remarked here that for $p=1,$ these operators will not reduce to the $q$-Durrmeyer operators; but for $p=q=1,$ these will reduce to the Durrmeyer operators.

\begin{lem}\label{l1}
Let $e_{m}=t^{m},$ $m \in \mathbb{N}\cup \{0\},$ then for $x\in \lbrack 0,1]$ and $0<q<p\le  1,$ we have
\begin{equation*}
D_{n}^{p,q}(e_{0},x)=1,\quad
D_{n}^{p,q}(e_{1},x)=\frac{p[n]_{p,q}x}{[n+2]_{p,q}},
\end{equation*}

\begin{equation*}
D_{n}^{p,q}(e_{2},x)=\frac{(p+q)p^{n+1}[n]_{p,q}x}{[n+2]_{p,q}[n+3]_{p,q}}+\frac{([n]_{p,q}-p^{n-1})p^{2}q[n]_{p,q}x^2}{[n+2]_{p,q}[n+3]_{p,q}} \cdot
\end{equation*}
\end{lem}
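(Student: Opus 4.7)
The plan is to evaluate the inner $(p,q)$-integral appearing in $D_n^{p,q}(e_m,x)$ as a $(p,q)$-Beta integral, apply (\ref{a4}) to convert it into $(p,q)$-factorials, and then reduce the resulting weighted sum over $k$ to identity (\ref{eb0}) and the Bernstein moments recorded in Remark~\ref{r1}. Concretely, since $b_{n,k-1}^{p,q}(t) = \qfrac{n}{k-1}_{p,q}t^{k-1}(1\ominus qt)_{p,q}^{n-k+1}$, the inner integral is $\qfrac{n}{k-1}_{p,q}\,B_{p,q}(k+m,n-k+2)$; by (\ref{a4}) this equals
\[\frac{p^{(n-k+1)(n+k+2m)/2}\,[k]_{p,q}[k+1]_{p,q}\cdots[k+m-1]_{p,q}}{[n+1]_{p,q}[n+2]_{p,q}\cdots[n+m+1]_{p,q}},\]
with the numerator product interpreted as $1$ when $m=0$. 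Multiplying by $[n+1]_{p,q}\,p^{-(n-k+1)(n+k)/2}$ from (\ref{operator}) cancels $[n+1]_{p,q}$ and leaves the net factor $p^{m(n-k+1)}$, so that
\[D_n^{p,q}(e_m,x) = \sum_{k=1}^n p^{m(n-k+1)}\,\frac{[k]_{p,q}\cdots[k+m-1]_{p,q}}{[n+2]_{p,q}\cdots[n+m+1]_{p,q}}\,b_{n,k}^{p,q}(1,x)\]
for $m\ge 1$, together with the boundary contribution $b_{n,0}^{p,q}(1,x)f(0)$ from (\ref{operator}) in the $m=0$ case.

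The cases $m=0$ and $m=1$ now follow immediately. For $m=0$ the expression collapses to $\sum_{k=0}^n b_{n,k}^{p,q}(1,x)$, which equals $1$ by (\ref{eb0}). For $m=1$ it becomes $\frac{p}{[n+2]_{p,q}}\sum_{k=0}^n p^{n-k}[k]_{p,q}\,b_{n,k}^{p,q}(1,x) = \frac{p[n]_{p,q}x}{[n+2]_{p,q}}$, using the first Bernstein moment $B_{n,p,q}(e_1,x)=x$ from Remark~\ref{r1}.

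The substantive step is the $m=2$ case, where I need to evaluate $S := \sum_{k=1}^n p^{2(n-k+1)}[k]_{p,q}[k+1]_{p,q}\,b_{n,k}^{p,q}(1,x)$. The key idea is the decomposition $[k+1]_{p,q} = q[k]_{p,q} + p^k$, immediate from the definition of $[n]_{p,q}$, which gives $[k]_{p,q}[k+1]_{p,q} = q[k]_{p,q}^2 + p^k[k]_{p,q}$. Since $p^{2(n-k+1)+k} = p^{n+2}\cdot p^{n-k}$, the sum splits as
\[S = qp^2\sum_{k=1}^n p^{2(n-k)}[k]_{p,q}^2\,b_{n,k}^{p,q}(1,x) + p^{n+2}\sum_{k=1}^n p^{n-k}[k]_{p,q}\,b_{n,k}^{p,q}(1,x),\]
the two pieces being precisely the second and first Bernstein moments from Remark~\ref{r1}. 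Substituting yields $S = qp^2\bigl([n]_{p,q}^2 x^2 + p^{n-1}[n]_{p,q}x(1-x)\bigr) + p^{n+2}[n]_{p,q}x$; collecting coefficients of $x$ and $x^2$ gives $(p+q)p^{n+1}[n]_{p,q}x + ([n]_{p,q}-p^{n-1})p^2 q[n]_{p,q}x^2$, and dividing by $[n+2]_{p,q}[n+3]_{p,q}$ produces the claimed formula. The main obstacle is recognising the right decomposition of $[k+1]_{p,q}$: the symmetric alternative $[k+1]_{p,q}=p[k]_{p,q}+q^k$ would leave a residual sum $\sum_k p^{2(n-k)}q^k[k]_{p,q}\,b_{n,k}^{p,q}(1,x)$ not directly evaluable from Remark~\ref{r1}, whereas the choice $q[k]_{p,q}+p^k$ reduces everything to the two Bernstein moments already listed.
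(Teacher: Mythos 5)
Your proposal is correct and follows essentially the same route as the paper: evaluate the inner integral as $\qfrac{n}{k-1}_{p,q}B_{p,q}(k+m,n-k+2)$, convert via (\ref{a4}) so the $p$-powers cancel against the prefactor $p^{-(n-k+1)(n+k)/2}$, and reduce to the Bernstein moments of Remark~\ref{r1}, using for $m=2$ exactly the paper's decomposition $[k+1]_{p,q}=p^k+q[k]_{p,q}$. The only (harmless) difference is that you package the three cases into one general-$m$ formula before specialising.
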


\begin{proof}
Using (\ref{a4}) and (\ref{a5}) and Remark \ref{r1}, we have
\begin{eqnarray*}
D_{n}^{p,q}(e_{0},x) &=&\left[ n+1\right] _{p,q}%
\sum\limits_{k=1}^{n}p^{-[(n+1-k)( n+k)/2]} b_{n,k}^{p,q}(1,x)\\
&&\quad\times\,\int\limits_{0}^{1}\qfrac{n}{k-1}_{p,q} t^{k-1} \, (1\ominus qt)_{p,q}^{n+1-k}{\D}_{p,q}t+b_{n,0}^{p,q}(1,x) \\
&=&\left[ n+1\right] _{p,q}%
\sum\limits_{k=1}^{n}p^{-[(n+1-k)( n+k)/2]} b_{n,k}^{p,q}(1,x)\qfrac{n}{k-1}\\
&&\quad\times\, B_{p,q}(k,n-k+2)+b_{n,0}^{p,q}(1,x) \\
&=&\left[ n+1\right] _{p,q}%
\sum\limits_{k=1}^{n}p^{-[(n+1-k)( n+k)/2]}  b_{n,k}^{p,q}(1,x) \frac{[n]_{p,q}![k-1]_{p,q}!}{[n+1-k]_{p,q}!}\\
&& \quad\times \  p^{[(n+1-k)( n+k)/2]} \
\frac{[k-1]_{p,q}![n-k+1]_{p,q}!}{[n+1]_{p,q}!%
}+b_{n,0}^{p,q}(1,x)  \\
&=&B_{n,p,q}\left( 1,x\right) =1.
\end{eqnarray*}
Next, applying Remark \ref{r1}, we have
\begin{eqnarray*}
D_{n}^{p,q}(e_{1},x) &=&\left[ n+1\right] _{p,q}%
\sum\limits_{k=1}^{n}p^{-[(n+1-k)(n+k)/2]}  b_{n,k}^{p,q}(1,x)\\
&&\quad\times\,\int\limits_{0}^{1}\qfrac{n}{k-1}_{p,q} t^{k} \,
(1\ominus qt)_{p,q}^{n+1-k} {\D}_{p,q}t \\
&=&\left[ n+1\right] _{p,q}%
\sum\limits_{k=1}^{n}p^{-[(n+1-k)( n+k)/2]}  b_{n,k}^{p,q}(1,x)\\
&&\quad\times\,\qfrac{n}{k-1}_{p,q} B_{p,q}(k+1,n-k+2) \\
&=&\left[ n+1\right] _{p,q}%
\sum\limits_{k=1}^{n}p^{-[(n+1-k)( n+k)/2]} \ b_{n,k}^{p,q}(1,x)\qfrac{n}{k-1}_{p,q}\\
&& \quad\times \ p^{(n+1-k)(n+k+2)/2} \ \frac{[k]_{p,q}![n-k+1]_{p,q}!}{%
[n+2]_{p,q}!} \\
&=&\sum\limits_{k=1}^{n}p^{n-k+1}b_{n,k}^{p,q}(1,x) \ \frac{[k]_{p,q}}{[n+2]_{p,q}} \\
&=&\frac{p[n]_{p,q}}{[n+2]_{p,q}}%
\sum\limits_{k=1}^{n}b_{n,k}^{p,q}(1,x)\frac{p^{n-k}[k]_{p,q}}{[n]_{p,q}}=\frac{p[n]_{p,q}x}{[n+2]_{p,q}} \cdot
\end{eqnarray*}

Further, using the identity $[k+1]_{p,q}=p^k+q[k]_{p,q}$ and by
Remark \ref{r1}, we get
\begin{eqnarray*}
D_{n}^{p,q}(e_{2},x) &=&\left[ n+1\right] _{p,q}%
\sum\limits_{k=1}^{n}p^{-[(n+1-k)( n+k)/2]} \ b_{n,k}^{p,q}(1,x)\\
&&\quad\times\,\int\limits_{0}^{1}\qfrac{n}{k-1}_{p,q}t^{k+1} \, (1\ominus qt)_{p,q}^{n+1-k} {\D}_{p,q}t \\
&=&\left[ n+1\right] _{p,q}%
\sum\limits_{k=1}^{n}p^{-[(n+1-k)( n+k)/2]} \ b_{n,k}^{p,q}(1,x)\\
&&\quad\times\,\qfrac{n}{k-1}_{p,q} B_{p,q}(k+2,n-k+2) \\
&=&\left[ n+1\right] _{p,q}%
\sum\limits_{k=1}^{n}p^{-[(n+1-k)( n+k)/2]} \ b_{n,k}^{p,q}(1,x)\qfrac{n}{k-1}_{p,q}\\
&& \quad\times \, p^{(n+1-k)(n+k+4)/2}\frac{[k+1]_{p,q}![n-k+1]_{p,q}!}{%
[n+3]_{p,q}!}\\
&=&\sum\limits_{k=1}^{n}p^{2(n-k+1)} \ b_{n,k}^{p,q}(1,x) \, \frac{[k]_{p,q}[k+1]_{p,q}}{[n+2]_{p,q}[n+3]_{p,q}}
\end{eqnarray*}
i.e.,
{\small\begin{eqnarray*}
D_{n}^{p,q}(e_{2},x)
&=&\sum\limits_{k=1}^{n}p^{2(n-k+1)} \ b_{n,k}^{p,q}(1,x) \, \frac{[k]_{p,q}(p^k+q[k]_{p,q})}{[n+2]_{p,q}[n+3]_{p,q}} \\
&=&\frac{p^{n+2}[n]_{p,q}}{[n+2]_{p,q}[n+3]_{p,q}}%
\sum\limits_{k=1}^{n}b_{n,k}^{p,q}(1,x) \ \frac{p^{n-k}[k]_{p,q}}{[n]_{p,q}}\\
&&
\qquad+\frac{p^{2}q[n]_{p,q}^2}{[n+2]_{p,q}[n+3]_{p,q}}\sum\limits_{k=1}^{n}b_{n,k}^{p,q}(1,x)\left(\frac{p^{n-k}[k]_{p,q}}{[n]_{p,q}}\right)^2\\
&=&\frac{p^{n+2}[n]_{p,q}x}{[n+2]_{p,q}[n+3]_{p,q}}+\frac{p^{2}q[n]_{p,q}^2}{[n+2]_{p,q}[n+3]_{p,q}}\left(x^2+\frac{p^{n-1}x(1-x)}{[n]_{p,q}}\right)\\
&=&\frac{p^{n+2}[n]_{p,q}x}{[n+2]_{p,q}[n+3]_{p,q}}+\frac{p^{2}q[n]_{p,q}^2x^2}{[n+2]_{p,q}[n+3]_{p,q}}+\frac{p^{n+1}q[n]_{p,q}x(1-x)}{[n+2]_{p,q}
[n+3]_{p,q}}\\
&=&\frac{(p+q)p^{n+1}[n]_{p,q}x}{[n+2]_{p,q}[n+3]_{p,q}}+\frac{([n]_{p,q}-p^{n-1})p^{2}q[n]_{p,q}x^2}{[n+2]_{p,q}[n+3]_{p,q}} \cdot
\end{eqnarray*}}
\end{proof}

\begin{rem} \label{cm} Using above lemma, we can obtain the following central moments:
\begin{eqnarray*}
1^\circ\!\!\!&&\!\!\! D_{n}^{p,q}\left((t-x),x\right)=\frac{(p[n]_{p,q}-[n+2]_{p,q})x}{[n+2]_{p,q}}\\[2mm]
2^\circ\!\!\!&&\!\!\! D_{n}^{p,q}\left((t-x)^2,x\right)=\frac{(p+q)p^{n+1}[n]_{p,q}x}{[n+2]_{p,q}[n+3]_{p,q}}\\[2mm]
&&\quad +\frac{[([n]_{p,q}-p^{n-1})p^{2}q[n]_{p,q}-2p[n]_{p,q}[n+3]_{p,q}+[n+2]_{p,q}
    [n+3]_{p,q}]x^2}{[n+2]_{p,q}[n+3]_{p,q}}.
\end{eqnarray*}
\end{rem}

\begin{lem}
\label{l2} Let $n$ be a given natural number, then
\begin{equation*}
D_{n}^{p,q}((t-x)^{2},x) \  \le  \  \frac{6}{[n+2]_{p,q}} \  \left(
\varphi^{2}(x) + \frac{1}{[n+2]_{p,q}} \right),
\end{equation*}
\noindent where $\varphi^{2}(x) = x ( 1 - x ),$ $x \in [0,1].$
\end{lem}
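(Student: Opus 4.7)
The plan is to start from the explicit expression for the second central moment given in part $2^{\circ}$ of Remark \ref{cm} and decompose it as a variance-type piece proportional to $\varphi^{2}(x)=x(1-x)$ plus a bias-type piece bounded by $1/[n+2]_{p,q}^{2}$. Writing $D_{n}^{p,q}((t-x)^{2},x)=\alpha\,x+\beta\,x^{2}$ with $\alpha,\beta$ read off from that formula, I will use $x=x(1-x)+x^{2}$ to rewrite the moment as $\alpha\,\varphi^{2}(x)+(\alpha+\beta)\,x^{2}$.

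The linear coefficient is easy to control: since $p+q\le 2$, $p^{n+1}\le 1$, and $[n]_{p,q}\le [n+3]_{p,q}$ (all immediate from $0<q<p\le 1$), one has
\[
\alpha=\frac{(p+q)p^{n+1}[n]_{p,q}}{[n+2]_{p,q}[n+3]_{p,q}}\le \frac{2}{[n+2]_{p,q}},
\]
so that $\alpha\,\varphi^{2}(x)\le 2\varphi^{2}(x)/[n+2]_{p,q}$.

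The main obstacle is showing $\alpha+\beta\le 6/[n+2]_{p,q}^{2}$, which together with $x^{2}\le 1$ will dispose of the remaining term. My plan is first to simplify $\beta$ via the identity $[n]_{p,q}-p^{n-1}=q[n-1]_{p,q}$, which follows directly from the definition of $[n]_{p,q}$. Clearing the common denominator $[n+2]_{p,q}[n+3]_{p,q}$ then puts the numerator of $\alpha+\beta$ in the form
\[
p^{n+1}(p+q)[n]_{p,q}+p^{2}q^{2}[n-1]_{p,q}[n]_{p,q}-2p[n]_{p,q}[n+3]_{p,q}+[n+2]_{p,q}[n+3]_{p,q}.
\]
A sanity check at $p=q=1$ collapses these four terms to exactly $6$ after cancellation, which pinpoints the constant in the statement. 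For general $0<q<p\le 1$ I would apply the recursion $[k+1]_{p,q}=p[k]_{p,q}+q^{k}$ to re-express $[n+2]_{p,q}$ and $[n+3]_{p,q}$ in terms of $[n]_{p,q}$; the large negative contribution $-2p[n]_{p,q}[n+3]_{p,q}$ should then absorb the bulk of the positive terms, leaving a remainder controlled by $6[n+3]_{p,q}/[n+2]_{p,q}$ thanks to the constraints $p,q\le 1$.

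Combining the two estimates and using $2\le 6$ yields
\[
D_{n}^{p,q}((t-x)^{2},x)\le \frac{2\,\varphi^{2}(x)}{[n+2]_{p,q}}+\frac{6}{[n+2]_{p,q}^{2}}\le \frac{6}{[n+2]_{p,q}}\Bigl(\varphi^{2}(x)+\frac{1}{[n+2]_{p,q}}\Bigr),
\]
which is the claimed inequality.
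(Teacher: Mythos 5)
Your proposal follows essentially the same route as the paper's proof: both start from the expression in Remark \ref{cm}, split off the linear coefficient (bounded by $2/[n+2]_{p,q}$ since $(p+q)p^{n+1}[n]_{p,q}\le 2[n+3]_{p,q}$, which supplies the $\varphi^{2}(x)$ term), and reduce everything to showing that the combined numerator $(p+q)p^{n+1}[n]_{p,q}+([n]_{p,q}-p^{n-1})p^{2}q[n]_{p,q}-2p[n]_{p,q}[n+3]_{p,q}+[n+2]_{p,q}[n+3]_{p,q}$ is at most $6$. The one step you leave as a plan --- verifying that bound for general $0<q<p\le 1$ rather than only at $p=q=1$ --- is precisely the step the paper also asserts without detail (it simply writes ``$\le 6$'' after a partial expansion), so your argument is no less complete than the published one; if you want to close it, the expansion organizes as $q(p-q^{2})^{2}[n]_{p,q}^{2}+p^{n}(\cdots)[n]_{p,q}+p^{2n}(p+q)(p^{2}+pq+q^{2})$ with equality to $6$ exactly at $p=q=1$.
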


\begin{proof}
In view of Lemma \ref{l1}, we obtain
\begin{eqnarray*}
&&D_{n}^{p,q}((t-x)^{2},x)=\frac{(p+q)p^{n+1}[n]_{p,q}x}{[n+2]_{p,q}[n+3]_{p,q}}\\[2mm]
&&\quad +\frac{[([n]_{p,q}-p^{n-1})p^{2}q[n]_{p,q}-2p[n]_{p,q}[n+3]_{p,q}+[n+2]_{p,q}
    [n+3]_{p,q}]x^2}{[n+2]_{p,q}[n+3]_{p,q}}.\qquad
\end{eqnarray*}

By direct computations, using the definition of the $(p,q)$%
-numbers, we get
\[
(p+q)p^{n+1}[n]_{p,q}= p^{n+1} (p + q)(p^{n-1}+p^{n-2}q+p^{n-3}q^2+\cdots +pq^{n-2}+q^{n-1})>0
\]
for every $q \in (q_{0},1)$.

Furthermore, the expression
\[(p+q)p^{n+1}[n]_{p,q}+([n]_{p,q}-p^{n-1})p^{2}q[n]_{p,q}-2p[n]_{p,q}[n+3]_{p,q}+[n+2]_{p,q}
    [n+3]_{p,q}\]
is equal to
\begin{eqnarray*}
&&(p+q)p^{n+1}[n]_{p,q}+   p^{2}q[n]_{p,q}^2-p^{n+1}q[n]_{p,q}\\[2mm]
&&-2p[n]_{p,q}(p^{n+2}+qp^{n+1}+q^2p^n+q^3[n]_{p,q})\\[2mm]
&&+(p^{n+1}+qp^n+q^2[n]_{p,q})(p^{n+2}+qp^{n+1}+q^2p^n+q^3[n]_{p,q})\le 6.
\end{eqnarray*}
\noindent In conclusion, for $x \in [0,1],$ we have
\begin{eqnarray}
D_{n}^{p,q}((t-x)^{2},x)&\le & \frac{6}{[n+2]_{p,q}} \ \delta_n^2(x),
\end{eqnarray}
\noindent which was to be proved.
\end{proof}

\section{Local and Global Estimates}
In this section, we estimate some direct results, viz., local and global approximation in terms of modulus of continuity.

Our first main result is a local theorem.\\
For this, we denote $W^{2}=\bigl\{g\in C[0,1]: g''\in
C[0,1]\bigr\},$ for $\delta >0$, $K-$functional is defined as
\begin{equation*}
K_{2}(f,\delta )=\inf \bigl\{\|f-g\|+\eta \|g''\|:g\in W^{2}\bigr\},
\end{equation*}%
where norm-$\|.\|$ denotes the uniform norm on $C[0,1]$. Following the well
known inequality due to DeVore and Lorentz \cite{Devore}, there exists an
absolute constant $C>0$ such that
\begin{equation}
K_{2}(f,\delta )\le  C\omega _{2}(f,\sqrt{\delta }),  \label{3.3.9}
\end{equation}%
where the second order modulus of smoothness for $f\in C[0,1]$ is defined as
\begin{equation*}
\omega _{2}(f,\sqrt{\delta })=\sup\limits_{0<h\le  \sqrt{\delta }}\sup_{x,x+h\in
\lbrack 0,1]}|f(x+h)-f(x)|.
\end{equation*}%
The usual modulus of continuity for $f\in C[0,1]$ is defined as
\begin{equation*}
\omega (f,\delta )=\sup\limits_{0<h\le  \delta }\sup\limits_{x,x+h\in \lbrack
0,1]}|f(x+h)-f(x)|.
\end{equation*}

\begin{thm}\label{T-d1}
Let $n > 3$ be a natural number and let $0<q<p\le  1,$ $q_{0} =
q_{0}(n) \in (0,p)$ be defined as in Lemma \ref{l2}. Then, there exists an
absolute constant $C > 0$ such that
\begin{equation*}
\vert D_{n}^{p,q}(f,x) - f(x) \vert \le    C \,  \omega_{2}\left(f ,
[n+2]_{p,q}^{-1/2} \delta_{n}(x)\right) + \omega \left(f,\frac{2x}{%
[n+2]_{p,q}}\right),
\end{equation*}
where $f \in C[0,1],$ $\delta_{n}^{2}(x) = \varphi^{2}(x) +
\frac{1}{[n+2]_{p,q}}$, $\varphi^{2}(x)=x(1-x)$, $x \in [0,1]$ and
$q \in (q_{0},1)$.
\end{thm}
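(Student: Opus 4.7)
The statement has the classical shape of a local approximation estimate, so the plan is to follow the Shisha--Mond / Ditzian framework: introduce an auxiliary operator that preserves linear functions, handle sufficiently smooth $g$ directly by Taylor expansion, and then pass to general $f$ via the $K$-functional equivalence (\ref{3.3.9}).

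Since Lemma \ref{l1} gives $D_n^{p,q}(e_1,x)=\alpha_n x$ with $\alpha_n := p[n]_{p,q}/[n+2]_{p,q}\neq 1$, I would work with the auxiliary operator
$$\widehat{D}_n^{p,q}(f,x):=D_n^{p,q}(f,x)+f(x)-f(\alpha_n x),$$
which satisfies $\widehat{D}_n^{p,q}(e_0,x)=1$, $\widehat{D}_n^{p,q}((t-x),x)=0$, and the trivial bound $|\widehat{D}_n^{p,q}(f,x)|\le 3\|f\|$. For $g\in W^2$, Taylor's formula with integral remainder combined with these two identities collapses $\widehat{D}_n^{p,q}(g,x)-g(x)$ to the image of $\int_x^t(t-u)g''(u)\,du$ under $\widehat{D}_n^{p,q}$; bounding the remainder pointwise by $\frac{1}{2}\|g''\|(t-x)^2$, applying Lemma \ref{l2} to the $D_n^{p,q}$-piece, and using the elementary estimate $(\alpha_n x-x)^2\le 4/[n+2]_{p,q}^2\le 4\,\delta_n^2(x)/[n+2]_{p,q}$ for the correction, I would obtain
$$|\widehat{D}_n^{p,q}(g,x)-g(x)|\le C_1\|g''\|\,\frac{\delta_n^2(x)}{[n+2]_{p,q}}.$$

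For arbitrary $f\in C[0,1]$ and any $g\in W^2$, the decomposition
$$|D_n^{p,q}(f,x)-f(x)|\le|\widehat{D}_n^{p,q}(f-g,x)|+|(f-g)(x)|+|\widehat{D}_n^{p,q}(g,x)-g(x)|+|f(\alpha_n x)-f(x)|$$
bounds the first two terms by $4\|f-g\|$, the third by the smooth-case estimate above, and the fourth by $\omega(f,|1-\alpha_n|x)\le\omega(f,2x/[n+2]_{p,q})$. Taking the infimum over $g$ turns the combined $\|f-g\|+\|g''\|\delta_n^2(x)/[n+2]_{p,q}$ contribution into $C\,K_2(f,\delta_n^2(x)/[n+2]_{p,q})$, and (\ref{3.3.9}) converts this into the desired $\omega_2$-term.

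The only point requiring some care beyond routine bookkeeping is the uniform estimate $|[n+2]_{p,q}-p[n]_{p,q}|\le 2$ needed both in the Taylor-remainder step and to produce the $\omega(f,2x/[n+2]_{p,q})$ summand; this is verified from the identity $[n+2]_{p,q}-p[n]_{p,q}=p[n]_{p,q}(p-1)+pq^n+q^{n+1}$ together with the hypothesis $q\in(q_0,1)$ inherited from Lemma \ref{l2}. The conceptual hurdle is recognizing that because $D_n^{p,q}$ fails to reproduce linear functions, the correction $f(\alpha_n x)-f(x)$ has to be pulled out as the separate $\omega$-term; once that is done, the rest is a direct application of the Ditzian--Totik machinery.
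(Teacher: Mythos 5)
Your proposal is correct and follows essentially the same route as the paper: the same auxiliary operator $\widetilde{D}_{n}^{p,q}(f,x)=D_{n}^{p,q}(f,x)+f(x)-f\bigl(p[n]_{p,q}x/[n+2]_{p,q}\bigr)$ preserving linear functions, the same Taylor-remainder bound via Lemma \ref{l2}, the same $4\|f-g\|$ decomposition with the separate $\omega\bigl(f,2x/[n+2]_{p,q}\bigr)$ term, and the same passage through $K_{2}$ and (\ref{3.3.9}). The bound $|[n+2]_{p,q}-p[n]_{p,q}|\le 2$ that you single out is indeed the same (implicitly used) ingredient the paper needs in its estimate (\ref{ee}) and in the final modulus-of-continuity term.
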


\begin{proof}
For $f \in C[0,1]$ we define
\begin{equation*}
\widetilde{D}_{n}^{p,q}(f,x) \ = \ D_{n}^{p,q}(f,x) + f(x) - f\left( \frac{%
p[n]_{p,q}x}{[n+2]_{p,q}} \right).
\end{equation*}

\noindent Then, by Lemma \ref{l1}, we immediately observe that
$$
\tilde{D}_{n}^{p,q}(1,x) = D_{n}^{p,q}(1,x) = 1$$ and $$
\tilde{D}_{n}^{p,q}(t,x) = D_{n}^{p,q}(t,x) + x - \frac{p[n]_{p,q}x}{%
[n+2]_{p,q}} = x.$$
By applying Taylor's formula
\begin{equation*}
g(t) = g(x) + (t-x)g'(x) + \int\limits_{x}^{t}(t-u) g''(u)\,{\D}u,
\end{equation*}
we get
\begin{eqnarray*}
\widetilde{D}_{n}^{p,q}(g,x) &=& g(x) + \widetilde{D}_{n}^{p,q}\left( \int\limits_{x}^{t} \
(t-u)g''(u)\, {\D}u , x \right)  \\
&=& g(x) + D_{n}^{p,q}\left( \int\limits_{x}^{t} (t-u) g''(u)\,
{\D}u , x \right)\\[2mm]
&&\qquad
- \int\limits_{x}^{\frac{p[n]_{p,q}x}{[n+2]_{p,q}}}
\left(\frac{p[n]_{p,q}x}{[n+2]_{p,q}} - u \right) g''(u)\,{\D}u.
\end{eqnarray*}

\noindent Thus,
\noindent$\displaystyle \vert \widetilde{D}_{n}^{p,q}(g,x) - g(x)
\vert $
\begin{eqnarray}  \label{3.3.12}
\!\!\!&\le &\!\!\! D_{n}^{p,q}\left( \Biggm \vert \int\limits_{x}^{t} \  \vert t-u \vert \cdot
\vert g''(u) \vert \ du \Biggm \vert , x \right) + \Biggm
\vert \int\limits_{x}^{\frac{p[n]_{p,q}x}{[n+2]_{p,q}}}  \Biggm \vert \frac{%
p[n]_{p,q}x}{[n+2]_{p,q}} - u \Biggm \vert \vert g''(u) \vert {\D}u \Biggm \vert  \notag \\[2mm]
\!\!\!&\le &\!\!\! D_{n}^{p,q}( (t-x)^{2} , x ) \Vert g''\Vert +
\left( \frac{p[n]_{p,q}x}{[n+2]_{p,q}} - x \right)^{2}\Vert
g''\Vert.
\end{eqnarray}

Also, we have
\begin{eqnarray} \label{ee}
&& D_{n}^{p,q}( (t-x)^{2} , x ) + \left( \frac{%
p[n]_{p,q}x}{[n+2]_{p,q}} - x \right)^{2} \notag \\
&&\qquad \le  \frac{6}{[n+2]_{p,q}} \left( \varphi^{2}(x) + \frac{1}{[n+2]_{p,q}}
\right) + \left(\frac{(p[n]_{p,q} -  [n+2]_{p,q} ) x}{[n+2]_{p,q}}
\right)^{2}\qquad \notag \\
&&\qquad \le  \frac{10}{[n+2]_{p,q}} \left( \varphi^{2}(x) + \frac{1}{[n+2]_{p,q}}
\right).
\end{eqnarray}

\noindent Hence, by (\ref{3.3.12}) and with the condition $n > 3$ and $x
\in [0,1],$ we have
\begin{equation}  \label{3.3.17}
\vert \widetilde{D}_{n}^{p,q}(g,x) - g(x) \vert \le  \frac{10}{[n+2]_{p,q}} \
\delta_{n}^{2}(x) \ \Vert g''\Vert.
\end{equation}
\noindent Furthermore, for $f\in C[0,1]$ we have $||D_{n}^{p,q}(f,x)|| \le
||f||$, thus
\begin{equation} \label{3.3.18}
\vert \widetilde{D}_{n}^{p,q}(f,x) \vert \le  \vert D_{n}^{p,q}(f,x) \vert +
\vert f(x) \vert + \Biggm \vert f\left( \frac{p[n]_{p,q} x}{%
[n+2]_{p,q}} \right) \Biggm \vert \le  3 \Vert f \Vert.
\end{equation}
\noindent for all $f \in C[0,1].$

Now, for $f \in C[0,1]$ and $g \in W^{2},$ we obtain

\noindent$\displaystyle \vert D_{n}^{p,q}(f,x) - f(x) \vert$
\begin{eqnarray}
&=&\!\!\! \Biggm \vert \widetilde{D}_{n}^{p,q}(f,x) - f(x) + f\left( \frac{p
[n]_{p,q} x}{[n+2]_{p,q}} \right) - f(x) \Biggm \vert  \notag \\
&\le &\!\!\! \vert \widetilde{D}_{n}^{p,q}(f-g,x) \vert + \vert \widetilde{D}%
_{n}^{p,q}(g,x) - g(x) \vert + \vert g(x) - f(x) \vert \\[2mm]
&&\qquad\qquad\qquad\qquad\qquad\quad + \Biggm \vert %
f\left( \frac{p[n]_{p,q} x}{[n+2]_{p,q}} \right) - f(x) \Biggm \vert
\notag \\
&\le &\!\!\! 4 \ \Vert f - g \Vert + \frac{10}{[n+2]_{p,q}}  \delta_{n}^{2}(x)
 \Vert g''\Vert + \omega \left( f , \Biggm \vert \frac{%
( p[n]_{p,q} - [n+2]_{p,q} ) x}{[n+2]_{p,q}} \Biggm \vert \right),
\notag \\  \notag
\end{eqnarray}
\noindent where we have used (\ref{3.3.17}) and (\ref{3.3.18}). Taking the
infimum on the right hand side over all $g \in W^{2},$ we obtain at once
\[
\vert D_{n}^{p,q}(f,x) - f(x) \vert \le  10  K_{2}\left( f , \frac{1}{%
[n+2]_{p,q}} \delta_{n}^{2}(x) \right) + \omega \left( f , \frac{2x}{%
[n+2]_{p,q}} \right).
\]

\noindent Finally, in view of (\ref{3.3.9}), we find
\[
|D_{n}^{p,q}(f,x)-f(x)| \le C\,  \omega _{2}\left(
f,[n+2]_{p,q}^{-1/2}\delta _{n}(x)\right) +\omega \left( f,\frac{2x}{%
[n+2]_{p,q}}\right).
\]

\noindent This completes the proof of the theorem.
\end{proof}

The weighted modulus of continuity of second order is defined as:
\begin{equation*}
\omega _{2}^{\varphi }(f,\sqrt{\delta }) = \sup\limits_{0<h\le  \sqrt{\delta }%
}\sup\limits_{x, x\pm h\varphi \in \lbrack 0,1]}|f(x+h\varphi (x))-2f(x)+f(x-h\varphi
(x))|
\end{equation*}%
\noindent where $\varphi
(x)=\sqrt{x(1-x)}$. The corresponding $K$-functional is defined by
\begin{equation*}
\overline{K}_{2,\varphi }(f,\delta ) = \inf \bigl\{ \Vert f-g\Vert +\delta
\Vert \varphi ^{2}g''\Vert+\delta^2
\Vert g''\Vert:g\in W^{2}(\varphi ) \bigr\}
\end{equation*}%
where
\begin{equation*}
W^{2}(\varphi )=\bigl\{g\in C[0,1]:g'\in AC_{\loc}[0,1],\varphi
^{2}g''\in C[0,1]\bigr\}
\end{equation*}%
and $g^{\prime }\in AC_{\loc}[0,1]$ means that $g$ is differentiable and
$g'$ is absolutely continuous on every closed interval $[a,b]\subset
\lbrack 0,1]$. It is well-known due to Ditzian-Totik (see \cite[p. 24,
Theorem 1.3.1]{Ditzian}) that
\begin{equation}
\overline{K}_{2,\varphi }(f,\delta ) \le   C \omega _{2}^{\varphi }(f,%
\sqrt{\delta })  \label{3.3.19}
\end{equation}%
\noindent for some absolute constant $C>0.$ Moreover, the Ditzian-Totik
moduli of first order is given by
\begin{equation*}
\vec{\omega}_{\psi }(f,\delta )\ =\  \sup\limits_{0<h\leq \delta }\sup\limits_{x,x\pm h\psi
(x)\in \lbrack 0,1]}|f(x+h\psi (x))-f(x)|,
\end{equation*}
\noindent where $\psi$ is an admissible step-weight function on $[0,1].$ \\

Now, we state our next main result, i.e., the global estimate.

\begin{thm}
\label{td2}Let $n>3$ be a natural number and let $0<q<p\le
1,$ $q_{0}=q_{0}(n)\in (0,p)$ be defined as in Lemma \ref{l2}. Then, there exists
an absolute constant $C>0$ such that
\begin{equation*}
\Vert D_{n}^{p,q}f-f\Vert   \le   C \omega _{2}^{\varphi
}(f,[n+2]_{q}^{-1/2}) + \vec{\omega}_{\psi }(f,[n+2]_{q}^{-1}),
\end{equation*}%
where $f\in C[0,1],$ $q\in (q_{0},1)$, and $\psi (x)=x,$ $x\in \lbrack
0,1]. $
\end{thm}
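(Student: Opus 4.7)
My plan is to follow the Ditzian--Totik template, paralleling the proof of Theorem~\ref{T-d1} but measuring smoothness with the weighted $K$-functional $\overline K_{2,\varphi}$ in place of $K_{2}$. I will again work with the auxiliary operator
\[
\widetilde D_{n}^{p,q}(f,x) = D_{n}^{p,q}(f,x) + f(x) - f(x_n), \qquad x_n:=\frac{p[n]_{p,q}x}{[n+2]_{p,q}},
\]
which preserves $e_{0}$ and $e_{1}$ and satisfies $|\widetilde D_{n}^{p,q}(f,x)|\le 3\|f\|$, both facts already established in the proof of Theorem~\ref{T-d1}. For an arbitrary $g\in W^{2}(\varphi)$, I will split
\[
|D_{n}^{p,q}(f,x)-f(x)| \le |\widetilde D_{n}^{p,q}(f-g,x)| + \|f-g\| + |\widetilde D_{n}^{p,q}(g,x)-g(x)| + |f(x_n)-f(x)|,
\]
so that the first two summands are controlled by $4\|f-g\|$.

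The heart of the argument is to estimate $\widetilde D_{n}^{p,q}(g,x)-g(x)$ in terms of the weighted norms $\|\varphi^{2}g''\|$ and $\|g''\|$. Since $\widetilde D_{n}^{p,q}$ reproduces linears, Taylor's formula with integral remainder gives
\[
\widetilde D_{n}^{p,q}(g,x)-g(x) = D_{n}^{p,q}\!\left(\int_{x}^{t}(t-u)g''(u)\,\D u,\,x\right) - \int_{x}^{x_n}(x_n-u)g''(u)\,\D u.
\]
I will then invoke the classical Ditzian--Totik integral inequality
\[
\left|\int_{x}^{t}\frac{|t-u|}{\varphi^{2}(u)}\,\D u\right|\le \frac{(t-x)^{2}}{\varphi^{2}(x)},
\]
(see \cite{Ditzian}) on the bulk region, combined with the elementary bound $\bigl|\int_{x}^{t}|t-u||g''(u)|\D u\bigr|\le \tfrac12(t-x)^{2}\|g''\|$ near the endpoints, to convert the remainder into weighted form. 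Plugging this into $D_{n}^{p,q}$ and using Lemma~\ref{l2} in the expanded form $D_{n}^{p,q}((t-x)^{2},x)\le 6\varphi^{2}(x)/[n+2]_{p,q}+6/[n+2]_{p,q}^{2}$ should yield
\[
|\widetilde D_{n}^{p,q}(g,x)-g(x)| \le \frac{C\,\|\varphi^{2}g''\|}{[n+2]_{p,q}}+\frac{C\,\|g''\|}{[n+2]_{p,q}^{2}},
\]
the boundary integral up to $x_n$ being treated analogously using $|x_n-x|\le C/[n+2]_{p,q}$.

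The remaining piece $f(x_n)-f(x)$ is recognized as a first-order Ditzian--Totik increment with step-weight $\psi(x)=x$: writing $x_n-x=h\,\psi(x)$ with $h=(p[n]_{p,q}-[n+2]_{p,q})/[n+2]_{p,q}$ and using the identity $[n+2]_{p,q}=p^{2}[n]_{p,q}+pq^{n}+q^{n+1}$ to verify that $|h|\le C/[n+2]_{p,q}$ in the admissible parameter range, I obtain $|f(x_n)-f(x)|\le \vec\omega_{\psi}(f,C/[n+2]_{p,q})$. Assembling the three estimates, taking the infimum over $g\in W^{2}(\varphi)$, and applying the equivalence (\ref{3.3.19}) to pass from $\overline K_{2,\varphi}(f,[n+2]_{p,q}^{-1})$ to $\omega_{2}^{\varphi}(f,[n+2]_{p,q}^{-1/2})$ will finish the proof. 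I expect the main technical obstacle to be the singularity of $1/\varphi^{2}(x)$ at the endpoints in the Ditzian--Totik integral inequality; the resolution is to split into the two regimes $\varphi^{2}(x)[n+2]_{p,q}\ge 1$ and $\varphi^{2}(x)[n+2]_{p,q}<1$, absorbing the endpoint contribution into the $\|g''\|/[n+2]_{p,q}^{2}$ component already provided by $\overline K_{2,\varphi}$.
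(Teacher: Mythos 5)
Your proposal is correct and follows the same overall template as the paper: the same auxiliary operator $\widetilde D_{n}^{p,q}$ reproducing $e_0,e_1$, the same four-term splitting with $|\widetilde D_{n}^{p,q}(f,x)|\le 3\|f\|$, Taylor's formula with integral remainder, the central moment bound of Lemma~\ref{l2}, the infimum over $g\in W^{2}(\varphi)$ yielding $\overline K_{2,\varphi}(f,[n+2]_{p,q}^{-1})$, the treatment of $f(x_n)-f(x)$ as a first-order increment with step-weight $\psi(x)=x$, and finally (\ref{3.3.19}). The one place where you genuinely diverge is the weighted estimate of the Taylor remainder. You use the classical Ditzian--Totik inequality for the singular weight $\varphi^{2}$ and then split into the regimes $\varphi^{2}(x)[n+2]_{p,q}\ge 1$ and $<1$, sending the endpoint contribution into the $\|g''\|/[n+2]_{p,q}^{2}$ component of $\overline K_{2,\varphi}$. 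The paper avoids the case analysis entirely by applying the concavity argument to the regularized weight $\delta_n^{2}(x)=\varphi^{2}(x)+[n+2]_{p,q}^{-1}$, which is bounded away from zero, obtaining $|t-u|/\delta_n^{2}(u)\le|t-x|/\delta_n^{2}(x)$ uniformly and then splitting the norm via $\|\delta_n^{2}g''\|\le\|\varphi^{2}g''\|+[n+2]_{p,q}^{-1}\|g''\|$; this produces exactly the two terms of $\overline K_{2,\varphi}$ in a single computation and pairs cleanly with the bound (\ref{ee}). Both routes are standard and both work; the paper's is slightly more economical, while yours is the more textbook Ditzian--Totik argument. Two minor points: make sure the concavity inequality you quote for $\varphi^{2}$ is applied only where $\varphi^{2}(x)>0$ (your second regime covers the endpoints, so this is consistent), and note that your final bound $\vec\omega_{\psi}(f,C/[n+2]_{p,q})$ needs the elementary property $\vec\omega_{\psi}(f,\lambda\delta)\le(1+\lambda)\vec\omega_{\psi}(f,\delta)$ to match the stated form with argument $[n+2]_{p,q}^{-1}$ (the paper instead asserts $|p[n]_{p,q}-[n+2]_{p,q}|\le 1$ to drop the constant outright, which is if anything less careful than your version).
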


\begin{proof}
We again consider
\begin{equation*}
\widetilde{D}_{n}^{p,q}(f,x)\ =\ D_{n}^{p,q}(f,x)+f(x)-f\left( \frac{%
p[n]_{p,q}x}{[n+2]_{p,q}}\right) ,
\end{equation*}

\noindent where $f\in C[0,1].$ Now, using Taylor's formula, we have
\[
g(t)=g(x)+(t-x)\ g^{\prime }(x)+\int\limits_{x}^{t}\, (t-u)\, g''(u)\, {\D}u.
\]
Using (\ref{l1}), we obtain
\begin{eqnarray*}
\widetilde{D}_{n}^{p,q}(g,x)\!\!\!& = &\!\!\!g(x)+D_{n}^{p,q}\left( \int\limits_{x}^{t}\ (t-u)\,
g''(u)\, {\D}u,x\right) \\[2mm]
&&\qquad\qquad  -\int\limits_{x}^{\frac{p[n]_{p,q}x}{%
[n+2]_{p,q}}}\  \left( \frac{p[n]_{p,q}x}{[n+2]_{p,q}}-u\right)
g''(u)\, {\D}u.
\end{eqnarray*}
Thus,
\begin{eqnarray}\label{3.3.20}
|\tilde{D}_{n}^{p,q}(g,x)-g(x)|\!\!\!&\le&\!\!\! D_{n}^{p,q}\left( \Biggm \vert \int\limits_{x}^{t}|t-u|  |g''(u)|\,{\D}u\Biggm \vert,x\right)\nonumber \\ [2mm]	
&& +\Biggm \vert \int\limits_{x}^{\frac{%
p[n]_{p,q}x}{[n+2]_{p,q}}}\Biggm \vert \frac{p[n]_{p,q}x}{%
[n+2]_{p,q}}-u\Biggm \vert  |g''(u)|\,{\D}u\Biggm \vert .\qquad
\end{eqnarray}

Since $\delta_{n}^{2}$ is concave on $[0,1],$ therefore, for $u = t + \tau ( x - t ),$ $\tau \in [0,1],$ the following estimate holds:
\begin{equation*}
\frac{\vert t - u \vert}{\delta_{n}^{2}(u)} = \frac{\tau \vert x - t \vert}{%
\delta_{n}^{2}(t + \tau ( x - t ))} \le  \frac{\tau \vert x - t \vert}{%
\delta_{n}^{2}(t) + \tau ( \delta_{n}^{2}(x) - \delta_{n}^{2}(t) )} \le
\frac{\vert t - x \vert}{\delta_{n}^{2}(x)} \cdot
\end{equation*}

\noindent Thus, using (\ref{3.3.20}), we obtain

\noindent $\displaystyle|\tilde{D}_{n}^{p,q}(g,x)-g(x)|\  $
\begin{eqnarray}
\!\!\!&\le&\!\!\!D_{n}^{p,q}\left(\Biggm \vert\! \int\limits_{x}^{t} \frac{|t-u|}{\delta
_{n}^{2}(u)}\,{\D}u\Biggm \vert,x\right)  \Vert \delta _{n}^{2}g''\Vert +\Biggm \vert\!\! \int\limits_{x}^{\frac{p[n]_{p,q}x}{[n+2]_{p,q}}%
} \frac{\Bigm \vert \frac{p[n]_{p,q}x}{[n+2]_{p,q}}-u\Bigm \vert}{%
\delta _{n}^{2}(u)}\,{\D}u\!\!\Biggm \vert \! \Vert \delta _{n}^{2}g''\Vert  \notag \\
\!\!\!&\le  &\!\!\!\frac{1}{\delta _{n}^{2}(x)} D_{n}^{p,q}((t-x)^{2},x) \Vert
\delta _{n}^{2}g''\Vert +\frac{1}{\delta _{n}^{2}(x)}
\left( \frac{p[n]_{p,q}x}{[n+2]_{p,q}}-x\right) ^{2}\Vert \delta
_{n}^{2}g''\Vert . \notag
\end{eqnarray}

\noindent For $x\in \lbrack 0,1],$ in view of (\ref{ee}) and
\begin{equation*}
\delta _{n}^{2}(x)\ |g''(x)| = |\varphi^{2}(x) \, g''(x)|+\frac{1%
}{[n+2]_{p,q}}\ |g''(x)| \le  \Vert \varphi^{2} \, g''\Vert +\frac{1}{%
[n+2]_{p,q}}\ \Vert g''\Vert ,
\end{equation*}

\noindent we have
\begin{equation}
|\widetilde{D}_{n}^{p,q}(g,x)-g(x)| \le  \frac{5}{[n+2]_{p,q}}\left( \Vert
\varphi ^{2}g''\Vert +\frac{1}{[n+2]_{p,q}}\ \Vert
g''\Vert \right) . \label{3.3.21}
\end{equation}

Using the fact that $[n]_{p,q} \le  [n+2]_{p,q}$, (\ref{3.3.18}) and (\ref{3.3.21}), for $f \in C[0,1],$ we get
\begin{eqnarray*}
|D_{n}^{p,q}(f,x)-f(x)|\!\!\!&\le&\!\!\!	
|\tilde{D}_{n}^{p,q}(f-g,x)|+|\tilde{D}%
_{n}^{p,q}(g,x)-g(x)|+|g(x)-f(x)|\\[2mm]
&&\qquad\qquad\qquad\ \, +\Biggm \vert f\left( \frac{%
p[n]_{p,q}x}{[n+2]_{p,q}}\right) -f(x)\Biggm \vert \\
\!\!\!&\le&\!\!\!4\Vert f-g\Vert +\frac{10}{[n+2]_{p,q}}\Vert \varphi
^{2}g''\Vert
 +\frac{10}{[n+2]_{p,q}}\Vert g''\Vert\\[2mm]
&&\qquad\qquad\qquad\ \,+\Biggm \vert f\left( \frac{p[n]_{p,q}x}{[n+2]_{p,q}}%
\right) -f(x)\Biggm \vert .
\end{eqnarray*}
On taking the infimum on the right hand side over all $g\in
W^{2}(\varphi ),$ we obtain
{\small\begin{equation}\label{3.3.22}
|D_{n}^{p,q}(f,x)-f(x)| \le  10  \overline{K}_{2,\varphi}\left( f,\frac{1}{%
[n+2]_{p,q}}\right) +\Biggm \vert f\left( \frac{p[n]_{p,q}x}{%
[n+2]_{p,q}}\right) -f(x)\Biggm \vert .
\end{equation}}
Moreover,
{\small
$$\Biggm \vert f\left( \frac{p[n]_{p,q}x}{
[n+2]_{p,q}}\right) -f(x)\Biggm \vert =\Biggm \vert f\left( x+\psi (x)\ \frac{x \, \left(p[n]_{p,q}-[n+2]_{p,q}\right)
}{[n+2]_{p,q} \ \psi (x)}\right) -f(x)\Biggm \vert\qquad\qquad$$}
{\small\begin{eqnarray*}
\!\!\!&\le&\!\!\! \sup\limits_{t,t+\psi (t)\
\, \frac{x \left(p[n]_{p,q}-[n+2]_{p,q}\right)
}{[n+2]_{p,q} \ \psi (x)}\in \lbrack 0,1]}\  \Biggm \vert %
f\left( t+\psi (t)  \frac{x \, \left(p[n]_{p,q}-[n+2]_{p,q}\right)}{%
[n+2]_{p,q}  \psi (x)}\right) -f(t)\Biggm \vert  \\
\!\!\!&\le&\!\!\! \vec{\omega}_{\psi }\left( f,\frac{|x \, \left(p[n]_{p,q}-[n+2]_{p,q}\right)|}{%
[n+2]_{p,q}   \psi (x)}\right)\\[2mm]
\!\!\!&\le&\!\!\!  \vec{\omega}_{\psi }\left( f,\frac{x}{%
[n+2]_{p,q}   \psi (x)}\right) =\vec{\omega}_{\psi }\left( f,\frac{1}{%
[n+2]_{p,q}}\right) .
\end{eqnarray*}}

Hence, by (\ref{3.3.22}) and (\ref{3.3.19}), we finally get
\begin{equation*}
\Vert D_{n}^{p,q}f-f\Vert \  \le  \ C\  \omega _{2}^{\varphi
}(f,[n+2]_{p,q}^{-1/2})\ +\  \omega _{\psi }(f,[n+2]_{p,q}^{-1}).
\end{equation*}%
 This completes the proof of the theorem.
\end{proof}

\begin{rem}
For $q\in (0,1)$ and $p\in (q,1]$ it is obvious that $\lim\limits_{n\rightarrow
\infty }\left[ n\right] _{p,q}=\frac{1}{p-q}.$ An example of such choice for $p,q$ depending on $n$ is recently given in \cite{VA}.
\end{rem}

\begin{ex}
 Now, we show comparisons and some illustrative
graphs for the convergence of $(p,q)$-analogue of Bernstein-Durrmeyer
operators $D_{n}^{p,q}(f,x)$ for different values of the parameters $p$ and $q,$ such that $0< q < p \le  1$.

For $x\in [0,1]$, $p = 0.5$ and $q = 0.4$, the convergence of the difference of the operators $D_{n}^{p,q}(f,x)$ to the function $f$, where $f(x)=9x^2-4x+5$, for different values of $n$ is illustrated in Fig.~\ref{Fig1}.
\begin{figure}[h]
\includegraphics[width=0.8\textwidth]{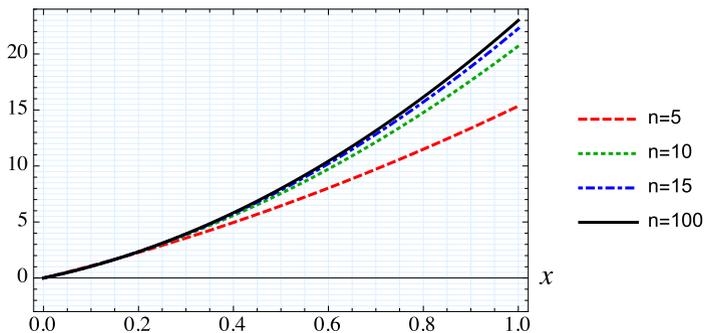}
\caption[]{Graphics of the difference $x\mapsto D_{n}^{0.5,0.4}(f,x)-f(x)$ for $x\in[0,1]$, when $f(x)=9x^2-4x+5$ and $n=5,10,15$ and $n=100$}\label{Fig1}
\end{figure}
\begin{figure}[h]
\includegraphics[width=0.8\textwidth]{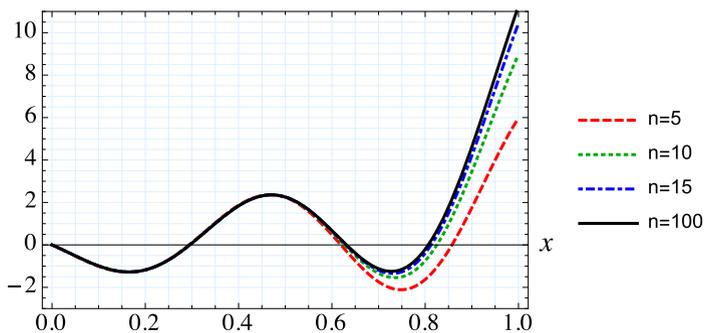}
\caption[]{Graphics of the difference $x\mapsto D_{n}^{0.5,0.4}(f,x)-f(x)$ for $x\in[0,1]$, when $f(x)=(x+1)^2\sin(10\pi x/3)$ and $n=5,10,15$ and $n=100$}\label{Fig2}
\end{figure}

The convergence of the difference of the operators $D_{n}^{p,q}(f,x)$ to the function $f$, where $f(x)=(x+1)^2\sin\left(\frac{10}3\pi x\right)$ for different values of $n$ and $x\in [0,1]$ is illustrated in Fig.~\ref{Fig2}.
\end{ex}

\begin{ex}
For the  function $f(x)=9 x^2 - 4 x + 5$ (and $p=0.5$, $q=0.4$), the limit of $D_n^{0.5,0.4}(f,x)$, when $n\to+\infty$,  is $f^*(x)=5 + (124/25) x + (576/25) x^2$. Graphics of $D_n^{0.5,0.4}(f,x)-f^*(x)$,   for $n=10,15,20,50$, are presented in Fig.~\ref{Fig3}.
\begin{figure}[h]
\includegraphics[width=0.8\textwidth]{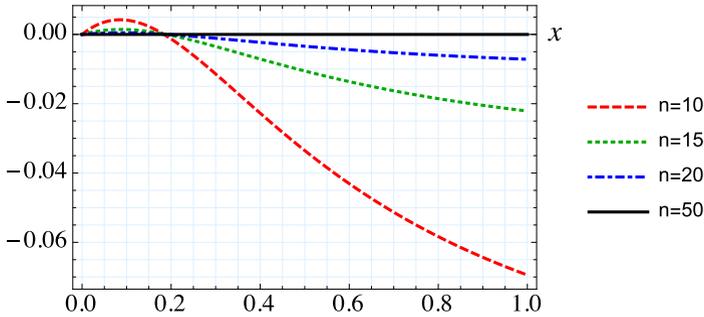}
\caption[]{Graphics of   $D_{n}^{0.5,0.4}(f,x)-f^*(x)$, when
$f(x)=9 x^2 - 4 x + 5$ and  $f^*(x)=5 + (124/25) x + (576/25) x^2$, for $n=10,15,20,50$}\label{Fig3}
\end{figure}
\end{ex}

\section{Better Approximation}

About a decade ago, King \cite{king} proposed a technique to obtain better approximation for the well known Bernstein polynomials. In this technique, these operators approximate each continuous function $f\in [0,1],$ while preserving the function $e_{2}(x)=x^2.$ These were basically compared with estimates of approximation by Bernstein polynomials. Various standard linear positive operators preserve $e_0$ and $e_1,$ i.e., preserve constant and linear functions, but this approach helps in reproducing the quadratic functions as well.

So, using King's technique, we modify the operators (\ref{operator}) as follows:
\begin{eqnarray*}
D_{n,p,q}^{*}(f,x)&=& \left[ n+1\right] _{p,q}%
\sum\limits_{k=1}^{n}p^{-(n-k+1)(n+k)/2}  b_{n,k}^{p,q}(1,r_{n}(x))\\
&&\qquad\times\,\int\limits_{0}^{1}b_{n,k-1}^{p,q}(t) f\left( t\right) {\D}_{p,q}t +  b_{n,0}^{p,q}(1,r_{n}(x)) f(0),
\end{eqnarray*}
where $r_{n}(x)= \frac{[n+2]_{p,q} \, x}{p [n]_{p,q}}$ and $x \in I_{n,p,q}=\left[0, \frac{[n+2]_{p,q}}{p [n]_{p,q}}\right]$.
Then, we have
\begin{eqnarray*}
D_{n,p,q}^{*}(e_0,x)= 1, \ \ D_{n,p,q}^{*}(e_1,x)= x,
\end{eqnarray*}

\begin{eqnarray*}
D_{n,p,q}^{*}(e_2,x)= \frac{(p+q)\, p^{n} \, x}{[n+3]_{p,q}} + \frac{([n]_{p,q}-p^{n-1}) \, q \, [n+2]_{p,q} \, x^{2}}{[n]_{p,q} [n+3]_{p,q}} \cdot
\end{eqnarray*}

Now, Theorem \ref{T-d1} can be modified as:
\begin{thm}
Let $n > 3$ be a natural number and let $0<q<p\le  1,$ $q_{0} =
q_{0}(n) \in (0,p)$ be defined as in Lemma \ref{l2}. Then, there exists an
absolute constant $C > 0$ such that
\begin{equation*}
\vert D_{n,p,q}^{*}(f,x) - f(x) \vert   \le    C \,  \omega_{2}\left(f ,
\sqrt{\delta_{n}^{p,q}(x)}\right) ,
\end{equation*}
\noindent where $x \in I_{n,p,q}=\left[0, \frac{[n+2]_{p,q}}{p [n]_{p,q}}\right],$ $q \in (q_{0},1)$, and
\begin{eqnarray*}
\delta_{n}^{p,q}(x)\!\!\!&=&\!\!\! D_{n,p,q}^{*}((t-x)^{2},x)\\
\!\!\!&=&\!\!\! \frac{(p+q)\, p^{n}  x}{[n+3]_{p,q}} + \frac{\{([n]_{p,q}-p^{n-1}) \, q \, [n+2]_{p,q} - [n]_{p,q} [n+3]_{p,q}\}x^{2}}{[n]_{p,q} [n+3]_{p,q}} \cdot
\end{eqnarray*}
\end{thm}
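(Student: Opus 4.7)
The plan is to mimic the argument of Theorem \ref{T-d1}, but take advantage of the crucial fact that, by construction of $r_n(x)$, the modified operator $D_{n,p,q}^{*}$ already reproduces both constants and linear functions: $D_{n,p,q}^{*}(e_0,x)=1$ and $D_{n,p,q}^{*}(e_1,x)=x$. Because of this, there is no need to introduce an auxiliary translated operator $\widetilde{D}_{n,p,q}^{*}$, and the extra $\omega(f,\cdot)$ term in Theorem \ref{T-d1} disappears. The whole estimate is then driven by the second central moment $\delta_{n}^{p,q}(x)=D_{n,p,q}^{*}((t-x)^2,x)$, which is computed directly from the formulas for $D_{n,p,q}^{*}(e_0,x)$, $D_{n,p,q}^{*}(e_1,x)$ and $D_{n,p,q}^{*}(e_2,x)$ given just before the statement.

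First, for $g \in W^2$ I would apply Taylor's expansion with integral remainder,
\[
g(t)=g(x)+(t-x)g'(x)+\int_{x}^{t}(t-u)g''(u)\,{\D}u,
\]
and apply $D_{n,p,q}^{*}(\cdot,x)$ to both sides. Using the linearity of the operator together with the identities $D_{n,p,q}^{*}(e_0,x)=1$ and $D_{n,p,q}^{*}(e_1,x)=x$, the first two Taylor terms yield exactly $g(x)$, so that
\[
D_{n,p,q}^{*}(g,x)-g(x)=D_{n,p,q}^{*}\!\left(\int_{x}^{t}(t-u)g''(u)\,{\D}u,\,x\right).
\]
Estimating the inner integral by $\tfrac{1}{2}(t-x)^2\|g''\|$ (or equivalently bounding $|t-u|\le|t-x|$ inside), this gives the key inequality
\[
|D_{n,p,q}^{*}(g,x)-g(x)|\le D_{n,p,q}^{*}((t-x)^2,x)\,\|g''\|=\delta_{n}^{p,q}(x)\,\|g''\|.
\]

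Next, since $D_{n,p,q}^{*}$ is positive and $D_{n,p,q}^{*}(e_0,x)=1$, it is a contraction on $C(I_{n,p,q})$, giving $\|D_{n,p,q}^{*}(f,\cdot)\|\le\|f\|$. For arbitrary $f\in C[0,1]$ and $g\in W^2$ I then write
\[
|D_{n,p,q}^{*}(f,x)-f(x)|\le |D_{n,p,q}^{*}(f-g,x)|+|D_{n,p,q}^{*}(g,x)-g(x)|+|g(x)-f(x)|,
\]
which by the contraction bound and the previous inequality is at most $2\|f-g\|+\delta_{n}^{p,q}(x)\|g''\|$. Taking the infimum over $g\in W^2$ on the right produces $2 K_2(f,\tfrac{1}{2}\delta_{n}^{p,q}(x))$, up to a harmless constant.

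Finally, I would invoke the DeVore--Lorentz inequality (\ref{3.3.9}) to pass from the $K$-functional to the second modulus of smoothness, obtaining
\[
|D_{n,p,q}^{*}(f,x)-f(x)|\le C\,\omega_{2}\bigl(f,\sqrt{\delta_{n}^{p,q}(x)}\bigr),
\]
which is exactly the claimed bound. No step here should present a genuine obstacle: the entire argument is a standard Peano-kernel/$K$-functional routine, and the only mild subtlety is to verify that the specific choice $r_n(x)=[n+2]_{p,q}x/(p[n]_{p,q})$ really does force $D_{n,p,q}^{*}(e_1,x)=x$, which is immediate from the formula for $D_{n}^{p,q}(e_1,x)$ in Lemma \ref{l1}. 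The final expression for $\delta_n^{p,q}(x)$ claimed in the theorem follows by direct substitution of the given formulas for $D_{n,p,q}^{*}(e_0),D_{n,p,q}^{*}(e_1),D_{n,p,q}^{*}(e_2)$ into $D_{n,p,q}^{*}(e_2,x)-2xD_{n,p,q}^{*}(e_1,x)+x^2D_{n,p,q}^{*}(e_0,x)$.
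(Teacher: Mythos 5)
Your proof is correct and is exactly the argument the paper intends: the paper omits the proof of this theorem, stating only that it proceeds on similar lines as Theorem \ref{T-d1}, and your proposal is precisely that adaptation via Taylor's formula, the second central moment, the contraction property, and the DeVore--Lorentz inequality (\ref{3.3.9}). Your observation that the preservation of $e_1$ by $D_{n,p,q}^{*}$ makes the auxiliary operator $\widetilde{D}$ and the first-order modulus term unnecessary correctly explains why the bound here contains no $\omega(f,\cdot)$ term.
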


The proof is on similar lines, so we omit the details.

\begin{ex}
 We compare the convergence of $(p,q)$-analogue of Bernstein-Durr\-meyer
operators $D_{n}^{p,q}(f,x)$ with the operators $D_{n,p,q}^{*}(f,x).$ We have considered the same function as in the previous example.

For $x\in \left[0, \frac{[n+2]_{p,q}}{p [n]_{p,q}}\right],$ $p = 0.5$ and $q = 0.4$, the convergence of the difference of the operators $D_{n,p,q}^{*}(f,x)$ to the function $f$, where $f(x)=9x^2-4x+5$, for different values of $n$ is illustrated in Fig.~\ref{Fig4}.

\begin{figure}[h]
\includegraphics[width=0.8\textwidth]{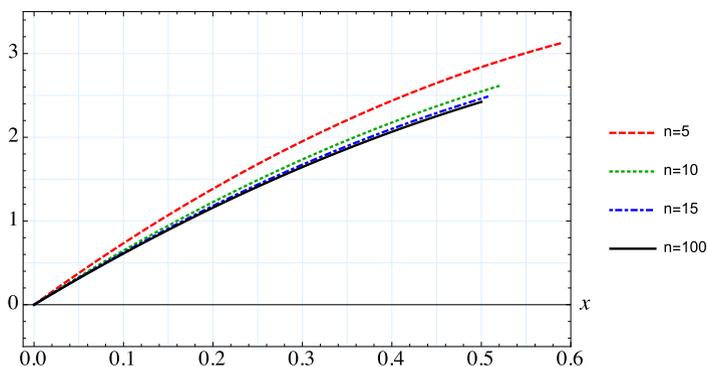}
\caption[]{Graphics of the difference $x\mapsto D^*_{n,0.5,0.4}(f,x)-f(x)$ for $x\in\left[0, {[n+2]_{p,q}}/{p [n]_{p,q}}\right]$, when $f(x)=9x^2-4x+5$ and $n=5,10,15$ and $n=100$.}\label{Fig4}
\end{figure}
\end{ex}


\end{document}